\def\R{{\Bbb R}}
\def\R{{\Bbb R}}
\def\C{{\Bbb C}}
\def\P{{\Bbb P}}
\numberwithin{equation}{section}
\newcommand {\Cal}{\mathcal}
\newtheorem{lemma}{Lemma}[section]
\newtheorem{theorem}[lemma]{Theorem}
\def\leq{\leqslant}
\begin{document}
\title{{Higher dimensional generalizations of some  theorems on normality of meromorphic functions}}
\author{Tran Van Tan}
\date{\small }
\maketitle
\begin{abstract} In [Israel J. Math, 2014],  Grahl and Nevo obtained a significant improvement for the well-known normality criterion of Montel. They  proved that for a family of meromorphic functions $\mathcal F$  in a domain $D\subset \C,$ and for a positive constant $\epsilon$,  if for each $f\in \mathcal F$ there exist meromorphic functions $a_f,b_f,c_f$ such that $f$ omits $a_f,b_f,c_f$ in $D$ and $$\min\{\rho(a_f(z),b_f(z)), \rho(b_f(z),c_f(z)), \rho(c_f(z),a_f(z))\}\geq \epsilon,$$ for all $z\in D$, then  $\mathcal F$ is normal in $D$. Here, $\rho$ is the spherical metric in $\widehat\C$. In this paper, we establish the high-dimensional versions for the above result and for the following well-known result of Lappan:  A meromorphic function $ f$  in the unit disc
$\triangle:=\{z\in\C: |z|<1\}$  is normal if there are five distinct values $a_1,\dots,a_5$ such that
$$\sup\{(1-|z|^2)\frac{ |f '(z)|}{1+|f(z)|^2}: z\in f^{-1}\{a_1,\dots,a_5\}\} < \infty.$$

\noindent2010 {\it Mathematics Subject Classification.} 32H30, 32A19, 32H20.\\
{\it Key words.}  Normal family,  Nevanlinna theory.
\end{abstract}
\section{Introduction}
Perhaps the most celebrated theorem in the theory of normal families is the following criterion of Montel \cite{M}.
\\

\noindent{\bf Theorem A.} {\it Let $\mathcal F$ be a family of meromorphic functions in a domain $D\subset\C$, and let $a,b,c$ be three distinct points in $\widehat\C.$
Assume that all functions in $\mathcal F$ omit three points $a,b,c$ in $D.$ Then $\mathcal F$ is a normal family in $D.$}\\

In  \cite{C}, Carath\'eodory extended Theorem A to the case where the omitted  points may depend on the  function in the family and satisfy a condition on the spherical distance. In 2014, Grahl and Nevo \cite{GN} generalized the result of Carath\'eodory to the case where all functions in the family omit three functions, and obtained  the following theorem.\\

\noindent{\bf Theorem B.}  {\it Let $\mathcal F$ be a family of meromorphic functions in a domain
$D\subset\C,$ and let $\epsilon$ be a positive constant.  Denote by $\rho$ the spherical metric in $\widehat\C.$ Assume that for each $f\in \mathcal F$ there exist meromorphic functions $a_f,b_f,c_f$ in $D$ such that $f$ omits $a_f,b_f,c_f$ in $D$ and $$\min\{\rho(a_f(z),b_f(z)), \rho(b_f(z),c_f(z)), \rho(c_f(z),a_f(z))\}\geq \epsilon,$$ for all $z\in D$, $f\in\mathcal F.$ Then  $\mathcal F$ is a normal family in $D$.}\\

\noindent In section 3, we shall establish the higher dimensional version of Theorem B.

A well-known result of Lehto and Virtanen \cite{LV} states that
the meromorphic function $f$ in the unit disc $\triangle:=\{z\in\C: |z|<1\}$  is normal if and only
if $\sup_{z\in\triangle}(1-|z|^2) f ^{\#}(z) < \infty$, where $f ^{\#}:=\frac{|f'|}{1+|f|^2}$ is the spherical
derivative of $f.$

In 1972, Pommerenke \cite{P} gave an open question:  if $M > 0$ is given, does there exist a finite subset $E\subset\widehat\C$ such
that if $f $ is a meromorphic in $\triangle$ then the condition that $(1 - |z|^2) f ^{\#}(z) \leq M$  for each
$z \in f ^{-1}(E)$  implies that $f $ is a normal function?
This question was answered by the  following well-known result  of Lappan \cite{L1}.\\

\noindent{\bf Theorem C.} Let $E\subset\widehat{\C}$ be any set consisting  of five distinct  points. If $ f$ is a meromorphic function in  $\triangle$  such that
\begin{align}\label{Lap}\sup\{(1-|z|^2) f ^{\#}(z): z\in f^{-1}(E)\} < \infty
\end{align}
then $f$ is a normal function.\\

In 1986, Hahn \cite{H} generalized Theorem C to the case of high dimension, however, unfortunately his proof  based on a false lemma (Lemma 2, \cite{H}).

In section 4, we shall establish the higher dimensional version for the above five-point theorem of Lappan.

In the case of dimension one, if $f$ is not normal, then by a result of Lohwater and Pommerenke \cite{LP}, there exist   sequences $\{z_k\}\subset\triangle,$ $\{r_k\}\subset \R,$ $r_k>0,$ with $\lim_{k\to\infty}\frac{r_k}{1-|z_k|}=0$ such that $g_k(\xi):=f(z_k+ r_k\xi)$ converges  uniformly on compact subsets of $\C$ to a non-constant meromorphic function $g.$ Then  condition (\ref{Lap}) implies that all zero points of $g-a \;(a\in E)$ have multiplicity at least 2; this is impossible  because  that $g$ is non-constant and $\#E=5.$

In the high dimensional case ($n\geq 2),$ from the view of Nevanlinna theory,  the most difficulty comes from the fact that for any $q\geq n+1$, there are hyperplanes $H_1,\dots,H_q$ in general position in $P^n(\C)$, and a non-constant entire curve $g$ in $P^n(\C)$ such that all zero points of $H_j(g)$ $(j=1,\dots,q)$ have multiplicity not less than 2 (even not less than $n$). Indeed, let $u$ be a non-constant holomorphic function  nowhere vanishing on $\C$. We consider $g=(\binom{0}{n}u^n:\binom{1}{n}u^{n-1}:\cdots:\binom{n}{n}u^0)$ and $q \;(q\geq n+1)$ hyperplanes  $H_j: a_j^0x_0+a_j^1x_1+\cdot +a_j^nx_n =0,$ ($j=1,\dots,q)$, where $a_1,\dots,a_q$ are $q$ distinct complex numbers.  Then they satisfy:

 i) $g$  is linearly non-degenerate;

 ii) For any $1\leq j_0<j_1<\cdots<j_n\leq q,$ the Vandermonde determinant $\det(a_{j_i}^s)_{0\leq i,s\leq n}=\prod_{0\leq t<k\leq n}(a_{j_k}-a_{j_t}) \ne0,$ hence, $H_1,\dots,H_q$ are in general position;

  iii) $H_j(g)=(u+a_j)^n$, hence, all zero points of $H_j(g)$ $ (j=1,\dots,q$) have multiplicity not less than $n.$

The above example also shows that Lemma 2 in \cite{H} is false.
\\

\noindent {\bf Acknowledgements:} This research was supported by Vietnam National Foundation for Science and Technology Development (NAFOSTED) under grant number 101.02-2016.17, and  was done during a stay of the author at  the Vietnam Institute for Advanced Studies in Mathematics. He wishes to express his gratitude to this institute. The author also  would like to thank the referee for  valuable comments and suggestions.
\section{Notations}
Let $\nu$ be a nonnegative divisor on $\C.$ For each positive integer (or $+\infty) $ $ p,$ we define the
counting function of $\nu$ (where multiplicities are truncated by $p)$ by
\begin{align*}N^{[p]}(r, \nu) :=\int_{1}^r\frac{n_{\nu}^{[p]}}{t}dt
\quad (1 < r < \infty)
\end{align*}
where $n_{\nu}^{[p]}(t)=\sum_{|z|\leq t}\min\{\nu(z), p\}.$ For brevity we will omit the character $ [p]$  in the counting
function  if $p = +\infty.$

For a  meromorphic function $\varphi$  on $\C$ $(̣\varphi\not\equiv 0, \varphi\not\equiv\infty$),  we denote by $(\varphi)_0$ the divisor of zeros of $\varphi$. We have the following Jensen's formula for the counting function:
\begin{align*} N(r, (\varphi)_0)-N(r, \left(\frac{1}{\varphi}\right)_0)=\frac{1}{2\pi}\int_{0}^{2\pi}\log\left|\varphi(re^{i\theta})\right|d\theta +O(1).
\end{align*}
We define the proximity function of $\varphi$ by
\begin{align*} m(r, \varphi)=\frac{1}{2\pi}\int_{0}^{2\pi}\log^{+}\left|\varphi(re^{i\theta})\right|d\theta,
\end{align*}
where $\log^+x=\max\{0,\log x\}$ for $x\geq0.$

 If $\varphi$ is nonconstant then $m(r,\frac{\varphi'}{\varphi})=o(T_{\varphi}(r))$ as $r\to\infty, $ outside a set of finite Lebesgue measure  (Nevanlinna's lemma on the logarithmic derivative).

Nevanlinna's first main theorem for $\varphi$ states that $T_\varphi(r)=N_{\frac{1}{\varphi}}(r)+m(r,\varphi) +O(1).$

Let $f$ be a holomorphic mapping of $\C$ into $P^n(\C)$ with a reduced representation $(f_0,\dots,f_n).$ The characteristic  function $T_f(r)$ of $f$ is defined by
\begin{align*}T_f(r):=\frac{1}{2\pi}\int_{0}^{2\pi} \log\Vert f(re^{i\theta})\Vert d\theta-\frac{1}{2\pi}\int_{0}^{2\pi} \log\Vert f(e^{i\theta})\Vert d\theta,\quad r>1,
\end{align*}
where $\Vert f\Vert=\max\limits_{i=0,\dots,n}|f_i|.$

Let   $H=\{(\omega_0:\cdots:\omega_n)\in P^n(\C): a_0\omega_0+\cdots+a_n\omega_n=0\}$  be a hyperplane in $P^n(\C)$ such that $f(\C)\not\subset H.$ Denote by $(H(f))_0$ the divisor of zeros of $a_0f_0+\cdots+a_nf_n,$ and put $N_f^{[p]}(r,H):=N^{[p]}(r, (H(f))_0).$

Let $q,\kappa$ be positive integers, $q\geq \kappa\geq n$ and let  $H_1,\dots,H_q$ be hyperplanes in $P^n(\C).$ These hyperplanes are said to be in $\kappa$-subgeneral  position if $\cap_{i=0}^\kappa H_{j_i}=\varnothing,$ for all $1\leq j_0<\cdots<j_\kappa\leq q.$

\noindent {\bf Nochka's second main theorem.} Let $f$ be a linearly nondegenerate holomorphic mapping of $\C$ into $P^n(\C),$ and let $H_1,\dots, H_q$ be hyperplanes  in $\kappa$-subgeneral position in $P^n(\C)$ ($\kappa\geq n$ and $q\geq 2\kappa-n+1).$ Then
\begin{align*}(q-2\kappa+n-1)T_f(r)\leq \sum_{j=1}^qN_f^{[n]}(r,H_j)+o(T_f(r)),
\end{align*}
for all $r\in(1,+\infty)$ excluding a subset of finite Lebesgue measure.

\section{The high-dimensional version of the Grahl-Nevo theorem}
Let $D$ be a  domain in $\C^m,$ and let $f$ and $H$ be two holomorphic mappings of $D$ into $P^n(\C).$ For each $z\in D$, we take  reduced representations $\widehat f=(f_0,\dots,f_n)$ of $f$ and $\widehat H=(a_0,\dots,a_n)$ of $H$ in a neighbourhood $U$ of $z$ and set $\big<\widehat f, \widehat H\big>:=a_0f_0+\cdots+a_nf_n.$ Denote by $\big<\widehat f,\widehat H\big>_0$  the zero divisor of the holomorphic function $\big<\widehat f, \widehat H\big>. $ The divisor $(H(f))_0:= \big<\widehat f,\widehat H\big>_0
$ is determined independently of a choice of reduced representations, and hence is well defined on the totality of $D.$  Put $f^{-1}(H):=\{z\in D: (H(f))_0(z)>0\}.$

 For $n+1$ points  $a_0,\dots,a_n$ in $P^n(\C),$ we denote by $d_{FS}(a_0,\dots,a_n)$ the  minimum of the Fubini-Study distances   from each point  to the subspace generated by these $n$ other points.

 We shall prove the following normality criterion.
\begin{theorem}\label{Normal} Let $\mathcal F$ be a family of holomorphic mappings of a domain $D\subset \C^m$ into $P^n(\C).$ For each $f\in \mathcal F$, we consider $2n+1$ holomorphic mappings $H_{1f},\dots, H_{(2n+1)f}$  of $D$ into $P^n(\C)$ satisfying the following condition:

\noindent For each compact subset $K$ of $ D,$ there is a positive constant $\delta_K$ such that
 \begin{align*}d_{FS}(H_{j_0f}(z),\dots,H_{j_nf}(z))\geq \delta_K
 \end{align*}
  for all subsets $\{j_0,\dots, j_n\}\subset\{1,\dots 2n+1\}$ and all $z\in K,$ $f\in\mathcal F.$

Assume that $f^{-1}(H_{jf})=\varnothing$ for all $f\in\mathcal F$ and $j\in\{1,\dots,2n+1\}.$
Then $\mathcal F$ is  normal on $D.$
\end{theorem}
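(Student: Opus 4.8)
The plan is to argue by contradiction using a Zalcman-type rescaling lemma adapted to holomorphic maps into $P^n(\C)$. Suppose $\mathcal F$ is not normal at some point $p\in D$. Then, after restricting to a suitable coordinate disc and using the Marty-type criterion for families of maps into $P^n(\C)$ (the spherical derivatives $\|f^\#\|$ are not locally uniformly bounded), a standard Zalcman rescaling argument produces a sequence $f_k\in\mathcal F$, points $z_k\to p$, and radii $\rho_k\to 0^+$ such that the rescaled maps $g_k(\xi):=f_k(z_k+\rho_k\xi)$ (restricted to a generic complex line through the origin, so that $\xi$ ranges over $\C$) converge locally uniformly on $\C$ to a \emph{nonconstant} holomorphic map $g:\C\to P^n(\C)$. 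The standard way to make this precise in the $P^n$ setting is Aladro--Krantz / Tu-type rescaling, which I would invoke; the output is an entire curve $g$ with $T_g(r)$ well defined and $g$ nonconstant.

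Next I track the hyperplane configurations. Fix the compact set $K=\overline{B(p,r_0)}\subset D$ with its constant $\delta:=\delta_K>0$. For each $k$, pull back the $2n+1$ maps: the composition $H_{jf_k}(z_k+\rho_k\cdot)$ is a holomorphic map of a shrinking disc into $P^n(\C)$, and the hypothesis says that for each $(n+1)$-subset of indices the Fubini--Study distance from any one of the corresponding points to the span of the other $n$ is $\geq\delta$ uniformly. By normal-families compactness of $P^n(\C)$ (and of the Grassmannian/flag data), after passing to a subsequence each $H_{jf_k}(z_k+\rho_k\cdot)$ converges locally uniformly on $\C$ to a holomorphic map $\widetilde H_j:\C\to P^n(\C)$; the uniform $\delta$-separation passes to the limit, so for every $z\in\C$ and every $(n+1)$-subset $\{j_0,\dots,j_n\}$ we have $d_{FS}(\widetilde H_{j_0}(z),\dots,\widetilde H_{j_n}(z))\geq\delta$. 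The point of this separation is that at each fixed $z$ the $2n+1$ hyperplanes $\widetilde H_1(z),\dots,\widetilde H_{2n+1}(z)$ are in \emph{general position} in $P^n(\C)$ in a quantitatively uniform way, so that intersection-with-$g$ behaves well. The hard part here, and what I expect to be the main obstacle, is that the $\widetilde H_j$ are \emph{moving} targets (slowly varying hyperplanes), so one must either (a) apply a moving-target second main theorem (Ru--Stoll type) with the requisite non-degeneracy, or (b) exploit that the rescaling forces the limiting hyperplanes to be essentially ``frozen'' on the relevant scale — i.e. show $T_{\widetilde H_j}(r)=o(T_g(r))$ or even that one may replace $\widetilde H_j$ by constant hyperplanes up to small error — and then invoke Nochka's second main theorem as stated in the excerpt.

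Granting that reduction, the contradiction is extracted as follows. From $f_k^{-1}(H_{jf_k})=\varnothing$ on $D$, the pulled-back inner products $\langle\widehat f_k, \widehat H_{jf_k}\rangle(z_k+\rho_k\cdot)$ are nowhere zero on the shrinking discs; by Hurwitz's theorem, the limit $\langle\widehat g,\widehat{\widetilde H}_j\rangle$ is either identically zero or nowhere zero on $\C$. The general-position bound $d_{FS}\geq\delta$ rules out $g(\C)\subset\widetilde H_j$ for too many $j$ simultaneously — in fact it forces $g$ to be linearly nondegenerate, or else $g$ lies in a proper linear subspace $V$, and then among the $2n+1$ hyperplanes restricted to $V$ one finds (by the $\delta$-separation, via a pigeonhole on which coordinate hyperplanes can contain $V$) at least $2\dim V+1$ in subgeneral position whose pullbacks to $g$ are nowhere zero, reducing to the same situation in lower dimension. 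So WLOG $g$ is linearly nondegenerate in $P^n(\C)$ and the $2n+1$ hyperplanes $\widetilde H_j$ are in general position (i.e. $n$-subgeneral position, $\kappa=n$). Applying Nochka's second main theorem (here $q=2n+1$, $\kappa=n$, so $q-2\kappa+n-1=n\geq1$):
\[
n\,T_g(r)\leq\sum_{j=1}^{2n+1}N_g^{[n]}(r,\widetilde H_j)+o(T_g(r)).
\]
But each $N_g^{[n]}(r,\widetilde H_j)=0$ since $\langle\widehat g,\widehat{\widetilde H}_j\rangle$ has no zeros, so $n\,T_g(r)\leq o(T_g(r))$, forcing $T_g(r)=O(1)$ and hence $g$ constant — contradicting the nonconstancy produced by the Zalcman rescaling. (In the moving-target route one uses the Ru--Stoll estimate in place of Nochka, with the $o(T_g(r))$ term absorbing $\sum T_{\widetilde H_j}(r)$; the inequality and the contradiction are identical.) This completes the proof, modulo the two technical points flagged above: the $P^n$-valued Zalcman rescaling lemma and the handling of the moving limit targets.
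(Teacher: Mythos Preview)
Your overall architecture --- Zalcman rescaling, Hurwitz on the inner products, then a second-main-theorem/Green-type contradiction --- is exactly the paper's, but there is a genuine gap at the step where you produce the limiting hyperplanes.

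You write that ``by normal-families compactness of $P^n(\C)$ \dots\ each $H_{jf_k}(z_k+\rho_k\,\cdot\,)$ converges locally uniformly on $\C$ to a holomorphic map $\widetilde H_j$.'' Compactness of the target does \emph{not} give normality of a family of holomorphic maps into it; e.g.\ $\zeta\mapsto e^{k\zeta}$ is a family of maps $\C\to P^1(\C)$ with compact target and no normal subsequence near $0$. Since the $H_{jf_k}$ depend on $f_k$, you have no a priori control on their spherical derivatives, so neither the original nor the rescaled hyperplane sequences are known to converge. This is precisely the content of the paper's Lemma~\ref{tan}: the uniform $d_{FS}$-separation hypothesis forces each family $\{H_{j\alpha}\}_{\alpha}$ to be normal on $D$. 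The proof of that lemma is itself a nontrivial Zalcman-plus-determinant argument and is the main new ingredient; it is not something one can wave through.

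Once Lemma~\ref{tan} is in hand, your ``option (b)'' becomes automatic and your moving-target worry disappears: a subsequence $H_{jf_k}\to h_j$ uniformly on compacta of $D$, and since $z_k+\rho_k u_k\zeta\to z_0$ for every fixed $\zeta$, the rescaled hyperplane maps converge to the \emph{constant} $h_j(z_0)$. Thus the limit curve $g$ faces $2n+1$ \emph{fixed} hyperplanes $H_1,\dots,H_{2n+1}$ in general position (the $\delta$-bound survives in the limit via Lemma~\ref{Cherry}), and Hurwitz gives $g(\C)\subset H_j$ or $g(\C)\cap H_j=\varnothing$ for each $j$. The paper finishes with Green's observation that no nonconstant entire curve lands in $(H_{i_1}\cap\cdots\cap H_{i_p})\setminus(H_{i_{p+1}}\cup\cdots\cup H_{i_{2n+1}})$; your Nochka-plus-dimension-reduction sketch aims at the same conclusion, but Green's formulation handles the degenerate cases cleanly in one stroke. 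No Ru--Stoll moving-target estimate is needed anywhere.
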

\begin{lemma}[Zalcman lemma, \cite{AK}, Lemma 3.1] \label{Zalcman}Let $\mathcal F$ be a family of holomorphic mappings of a domain $D\subset \C^m$ into $P^n(\C).$ If $\mathcal F$ is not normal then there exist  sequences $\{z_k\}\subset D$ with $z_k\to z_0\in D,$  $\{f_k\}\subset\mathcal F$, $\{\rho_k\}\subset\R$ with $\rho_k\to 0^+,$ and
Euclidean unit vectors $\{u_k\}\subset \C^m,$ such that $g_k(\zeta) := f_ k(z_k + \rho_k u_k\zeta),$ where $\zeta\in\C$
satisfies $z_k+ \rho_k u_k\zeta\in D,$ converges uniformly on compact subsets of $\C$ to a nonconstant
holomorphic mapping $g$ of $\C$ into $P^n(\C).$
\end{lemma}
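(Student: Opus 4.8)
The plan is to deduce the lemma from Marty's criterion combined with the classical Zalcman rescaling argument, adapted to directional derivatives in several variables. For a holomorphic map $h$ of a domain in $\C^m$ into $P^n(\C)$, a point $z$, and a Euclidean unit vector $u\in\C^m$, let $h_u^\#(z)$ denote the length of the image of $u$ under $dh(z)$ measured in the Fubini--Study metric, and set $h^\#(z):=\sup_{\|u\|=1}h_u^\#(z)$. Marty's criterion asserts that $\mathcal F$ is normal on $D$ if and only if $h^\#$ is uniformly bounded on each compact subset of $D$, as $h$ ranges over $\mathcal F$. First I would use the failure of normality to locate the blow-up: choosing a closed ball $\overline B(z_0,r_0)\subset D$ on which the spherical derivatives are unbounded, there exist $f_k\in\mathcal F$ with $\sup_{z\in\overline B(z_0,r_0/2)}f_k^\#(z)\to\infty$.

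Next I would run the Zalcman maximization. On $\overline B(z_0,r_0)$ consider the continuous nonnegative function $\psi_k(z):=(r_0-|z-z_0|)\,f_k^\#(z)$, which vanishes on the boundary sphere, and let $z_k$ be a point where it attains its maximum $M_k:=\psi_k(z_k)$. Since $\psi_k(z)\ge (r_0/2)f_k^\#(z)$ for $z\in\overline B(z_0,r_0/2)$, we get $M_k\to\infty$; from $M_k\le r_0 f_k^\#(z_k)$ we obtain $f_k^\#(z_k)\to\infty$, so that $\rho_k:=1/f_k^\#(z_k)\to 0^+$. Passing to a subsequence I may assume $z_k$ converges to a point of $\overline B(z_0,r_0)\subset D$, which I relabel as $z_0$, and I choose a unit vector $u_k$ realizing $f_k^\#(z_k)=f_{k,u_k}^\#(z_k)$ by compactness of the unit sphere. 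Define $g_k(\zeta):=f_k(z_k+\rho_k u_k\zeta)$; for each fixed $\zeta$ the argument lies in $D$ once $k$ is large.

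The heart of the proof is the bound on $g_k^\#$. By the chain rule $g_k^\#(\zeta)=\rho_k f_{k,u_k}^\#(z_k+\rho_k u_k\zeta)\le \rho_k f_k^\#(z_k+\rho_k u_k\zeta)$, and at $\zeta=0$ the choice of $u_k$ and $\rho_k$ gives $g_k^\#(0)=\rho_k f_k^\#(z_k)=1$. For general $\zeta$, the maximality of $M_k$ yields
\[
f_k^\#(z_k+\rho_k u_k\zeta)\le \frac{r_0-|z_k-z_0|}{\,r_0-|z_k+\rho_k u_k\zeta-z_0|\,}\,f_k^\#(z_k),
\]
and combining this with $|z_k+\rho_k u_k\zeta-z_0|\le|z_k-z_0|+\rho_k|\zeta|$ together with the identity $r_0-|z_k-z_0|=M_k\rho_k$ gives, on each disc $|\zeta|\le R$ and for $k$ large,
\[
g_k^\#(\zeta)\le \frac{1}{\,1-|\zeta|/M_k\,}\le \frac{1}{\,1-R/M_k\,}\longrightarrow 1.
\]

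Thus $\{g_k^\#\}$ is locally uniformly bounded on $\C$, so by Marty's criterion $\{g_k\}$ is a normal family on $\C$ and, after passing to a further subsequence, converges locally uniformly to a holomorphic map $g$ of $\C$ into $P^n(\C)$. Since spherical derivatives pass to the limit under such convergence, $g^\#(0)=\lim_k g_k^\#(0)=1\ne 0$, so $g$ is nonconstant, which completes the proof. The step I expect to be the main obstacle is the careful justification of the multivariable Marty criterion and of the chain-rule identity $g_k^\#(\zeta)=\rho_k f_{k,u_k}^\#(z_k+\rho_k u_k\zeta)$ in terms of the Fubini--Study metric, since one must check that the single-variable spherical derivative of the restriction of $f_k$ to the complex line $z_k+\rho_k u_k\C$ is exactly the directional spherical derivative of $f_k$ in the direction $u_k$ scaled by $\rho_k$; once this is in place, everything else is the standard rescaling estimate.
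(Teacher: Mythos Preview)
The paper does not give its own proof of this lemma; it is quoted as Lemma~3.1 of Aladro--Krantz \cite{AK} and used as a black box. Your argument is correct and is precisely the standard Zalcman rescaling proof that Aladro--Krantz carry out: Marty's criterion in several variables to detect blow-up of the Fubini--Study derivative on a compact ball, the auxiliary function $\psi_k(z)=(r_0-|z-z_0|)f_k^\#(z)$ maximized at $z_k$, the choice of a direction $u_k$ realizing the supremum, the scale $\rho_k=1/f_k^\#(z_k)$, and the estimate $g_k^\#(\zeta)\le (1-|\zeta|/M_k)^{-1}$ leading to normality of $\{g_k\}$ and a nonconstant limit via $g^\#(0)=1$. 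The only points worth tightening are the ones you already flagged: the several-variable Marty criterion (normality is equivalent to local uniform boundedness of $f^\#$, which follows from equicontinuity in the Fubini--Study metric and compactness of $P^n(\C)$) and the chain-rule identity, which is immediate once one writes $dg_k(\zeta)(1)=df_k(z_k+\rho_k u_k\zeta)(\rho_k u_k)$ and takes Fubini--Study lengths. You should also note explicitly that for $|\zeta|\le R<M_k$ the rescaled point lies in $\overline B(z_0,r_0)$, so the maximality inequality for $\psi_k$ is legitimately applied there.
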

\begin{lemma}[\cite{Ch}, Corollary 14]\label{Cherry} Let $P_0=(\omega_{00}:\cdots:\omega_{0n}),\dots,P_n=(\omega_{n0}:\cdots:\omega_{nn})$ be $n+1$ points in $P^n(\C).$ Then
\begin{align*}d_{FS}^n(P_0,\dots,P_n)\leq \frac{\left|\det(P_0,\dots,P_n)\right|}{\Vert P_0\Vert\cdots\Vert P_n\Vert}\leq d_{FS}(P_0,\dots,P_n),
\end{align*}
where $\Vert P_j\Vert=(|\omega_{j0}|^2+\cdots+|\omega_{jn}|^2)^{\frac{1}{2}}$ and $\det(P_0,\dots,P_n):=\det(\omega_{ji})_{0\leq i,j\leq n}.$
\end{lemma}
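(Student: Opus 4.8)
The plan is to pass to the underlying Hermitian geometry of $\C^{n+1}$ and read both inequalities off the elementary relation between the modulus of a complex determinant and the volume of the parallelepiped it spans. Fix representative vectors $\mathbf{P}_j=(\omega_{j0},\dots,\omega_{jn})\in\C^{n+1}$, write $V_j$ for the $n$-dimensional linear subspace of $\C^{n+1}$ spanned by $\{\mathbf{P}_i:i\neq j\}$, and recall that in the relevant Fubini--Study convention the distance from $P_j$ to the projective subspace spanned by the other $n$ points equals the normalized orthogonal distance $\mathrm{dist}(\mathbf{P}_j,V_j)/\Vert P_j\Vert$, so that $d_{FS}(P_0,\dots,P_n)=\min_j \mathrm{dist}(\mathbf{P}_j,V_j)/\Vert P_j\Vert$. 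Since both outer quantities in the claim are invariant under rescaling each $\mathbf{P}_j$, I would first normalize to unit vectors $\mathbf{e}_j:=\mathbf{P}_j/\Vert P_j\Vert$; then the middle quantity becomes exactly $|\det(\mathbf{e}_0,\dots,\mathbf{e}_n)|$, while the distance from $P_j$ to $V_j$ becomes $\mathrm{dist}(\mathbf{e}_j,V_j)$.

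For the upper bound I would use the base-times-height decomposition of the volume. Fix an index $j$. Writing the determinant's modulus as the volume of the parallelepiped on $\mathbf{e}_0,\dots,\mathbf{e}_n$ and splitting off $\mathbf{e}_j$, one obtains $|\det(\mathbf{e}_0,\dots,\mathbf{e}_n)|=\mathrm{vol}_n(\{\mathbf{e}_i:i\neq j\})\cdot \mathrm{dist}(\mathbf{e}_j,V_j)$, where the first factor is the $n$-dimensional volume of the base spanned by the remaining unit vectors, all of which lie in $V_j$. By Hadamard's inequality that base volume is at most $\prod_{i\neq j}\Vert \mathbf{e}_i\Vert=1$, hence $|\det(\mathbf{e}_0,\dots,\mathbf{e}_n)|\leq \mathrm{dist}(\mathbf{e}_j,V_j)=d_{FS}(P_j,[V_j])$. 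As this holds for every $j$, taking the minimum over $j$ yields the right-hand inequality $|\det|\leq d_{FS}(P_0,\dots,P_n)$.

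For the lower bound I would run Gram--Schmidt on $\mathbf{e}_0,\dots,\mathbf{e}_n$ in this order, obtaining the factorization $|\det(\mathbf{e}_0,\dots,\mathbf{e}_n)|=\prod_{k=0}^n\Vert \mathbf{f}_k\Vert$, where $\mathbf{f}_k$ is the component of $\mathbf{e}_k$ orthogonal to $W_k:=\mathrm{span}(\mathbf{e}_0,\dots,\mathbf{e}_{k-1})$, so that $\Vert \mathbf{f}_k\Vert=\mathrm{dist}(\mathbf{e}_k,W_k)$. The decisive observation is the nesting $W_k\subseteq V_k$ (every index $0,\dots,k-1$ is distinct from $k$), which forces $\mathrm{dist}(\mathbf{e}_k,W_k)\geq \mathrm{dist}(\mathbf{e}_k,V_k)=d_{FS}(P_k,[V_k])\geq d_{FS}(P_0,\dots,P_n)=:\theta$ for each $k\geq 1$, while $\Vert \mathbf{f}_0\Vert=1\geq\theta$. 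Multiplying the $n$ factors indexed by $k=1,\dots,n$ then gives $|\det(\mathbf{e}_0,\dots,\mathbf{e}_n)|\geq\theta^n$, which is the left-hand inequality.

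The step I expect to be the main obstacle is the correct identification of the Fubini--Study distance with the Euclidean quantity $\mathrm{dist}(\mathbf{P}_j,V_j)/\Vert P_j\Vert$, that is, fixing the convention in which $d_{FS}$ takes values in $[0,1]$ as the sine of the Hermitian angle rather than the angle itself. It is precisely this convention that makes both the factor-by-factor estimate $\Vert \mathbf{f}_k\Vert\geq\theta$ and the base-height estimate yield the stated power $n$; under the geodesic-angle normalization the Gram--Schmidt argument would only produce $(\sin\theta)^n$, which is weaker than $\theta^n$, so the exponent on the left genuinely depends on this choice. Everything else is the standard interplay of Gram--Schmidt, the Gram-determinant volume formula, and Hadamard's inequality, all of which transfer verbatim from the real to the Hermitian setting.
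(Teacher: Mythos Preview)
The paper does not supply its own proof of this lemma; it is quoted as \cite{Ch}, Corollary~14, with only the parenthetical remark that in the projectively dependent case both the determinant and $d_{FS}$ vanish, so the inequality is trivial there. Your argument is essentially the one in the cited reference: a base-times-height decomposition of the Hermitian parallelepiped volume together with Hadamard's inequality gives the upper bound, and the Gram--Schmidt factorization $|\det(\mathbf{e}_0,\dots,\mathbf{e}_n)|=\prod_k\Vert\mathbf{f}_k\Vert$ combined with the nesting $W_k\subset V_k$ gives the lower bound with exponent~$n$.

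Your flagged obstacle is exactly the right one, and you have resolved it correctly. In \cite{Ch} the distance from a point to a projective linear subspace is normalized as the sine of the Hermitian angle (so it takes values in $[0,1]$ and equals $\mathrm{dist}(\mathbf{e}_j,V_j)$ for unit representatives); this is what allows $\Vert\mathbf{f}_0\Vert=1\geq\theta$ and makes each Gram--Schmidt factor bounded below by $\theta$ rather than $\sin\theta$. Under the geodesic-angle normalization the stated inequality would indeed be false as written, so the convention is not cosmetic here.
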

\noindent In fact, in (\cite{Ch}, Corollary 14),  the points $P_0,\dots,P_n$ are    projectively independent, however, if they are projectively dependent, then $$\det(P_0,\dots,P_n)=0= d_{FS}(P_0,\dots,P_n).$$

 In the case  $n=m=1$,  the following lemma is due to Grahl and  Nevo \cite{GN}.
\begin{lemma}\label{tan} Let $\{H_{1\alpha}\}_{\alpha\in\mathcal A}, \dots,\{H_{q\alpha}\}_{\alpha\in\mathcal A}$ be $q$ ($q\geq n+1)$ families of holomorphic mappings of $D\subset\C̉̉̉̉̉̉̉^m$ into $P^n(\C).$ Assume that for each compact subset $K$ of $D$, there is a positive constant $\delta_K$ such that
$$d_{FS}(H_{j_0\alpha}(z),\dots,H_{j_n\alpha}(z))\geq \delta_K,$$
for all   $z\in K$, $\alpha\in\mathcal A$, and $1\leq j_0<j_1<\cdots< j_n\leq q$.

\noindent Then $\{H_{1\alpha}\}_{\alpha\in\mathcal A}, \dots,\{H_{q\alpha}\}_{\alpha\in\mathcal A}$ are normal families on $D$.
\end{lemma}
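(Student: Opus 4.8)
\medskip
\noindent\textit{Proof idea.} The plan is to obtain all $q$ normality statements at once by proving that the single family $\mathcal H:=\{(H_{1\alpha},\dots,H_{q\alpha}):\alpha\in\mathcal A\}$ of holomorphic mappings of $D$ into $Y:=(P^n(\C))^q$ is normal on $D$; composing with the holomorphic coordinate projections $Y\to P^n(\C)$ then gives normality of every $\{H_{j\alpha}\}_{\alpha\in\mathcal A}$. To run a Zalcman argument on $\mathcal H$, I would use that, through the Segre embedding $Y\hookrightarrow P^{N}(\C)$ (a holomorphic embedding of compact complex manifolds onto a closed submanifold), a family of holomorphic maps into $Y$ is normal precisely when the induced family into $P^{N}(\C)$ is; hence Lemma~\ref{Zalcman} applies to $\mathcal H$. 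So, if $\mathcal H$ were not normal, there would be $z_k\to z_0\in D$, $\{\alpha_k\}\subset\mathcal A$, $\rho_k\to 0^{+}$ and Euclidean unit vectors $u_k$ with
\[(H_{1\alpha_k}(z_k+\rho_k u_k\zeta),\dots,H_{q\alpha_k}(z_k+\rho_k u_k\zeta))\longrightarrow (g_1,\dots,g_q)\]
locally uniformly on $\C$, where $g_1,\dots,g_q:\C\to P^n(\C)$ are holomorphic and at least one of them is nonconstant. Taking $K$ to be a closed ball about $z_0$ inside $D$ (so that $z_k+\rho_k u_k\zeta\in K$ for $k$ large) and letting $k\to\infty$ in the hypothesis, using continuity of $d_{FS}$ on $(P^n(\C))^{n+1}$, I obtain $d_{FS}(g_{j_0}(\zeta),\dots,g_{j_n}(\zeta))\ge\delta:=\delta_K$ for every $\zeta\in\C$ and every $1\le j_0<\cdots<j_n\le q$.

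The crux is then the following claim, whose conclusion contradicts the previous paragraph: \emph{if $g_1,\dots,g_q:\C\to P^n(\C)$ are holomorphic with $d_{FS}(g_{j_0}(\zeta),\dots,g_{j_n}(\zeta))\ge\delta>0$ for all $\zeta$ and all $(n+1)$-subsets, then every $g_j$ is constant}. Since the hypothesis is symmetric, it suffices to show $g_1,\dots,g_{n+1}$ are constant. I would fix holomorphic reduced representations $\widehat g_j=(g_{j0},\dots,g_{jn}):\C\to\C^{\,n+1}\setminus\{0\}$ (these exist because $\C$ is simply connected) and set $W(\zeta):=\det(\widehat g_1(\zeta),\dots,\widehat g_{n+1}(\zeta))$, an entire function. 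Because $d_{FS}(g_1(\zeta),\dots,g_{n+1}(\zeta))\ge\delta>0$ the points $g_1(\zeta),\dots,g_{n+1}(\zeta)$ are projectively independent for every $\zeta$, so $W$ is zero-free; and combining Lemma~\ref{Cherry} with Hadamard's inequality $|W(\zeta)|\le\prod_j\Vert\widehat g_j(\zeta)\Vert$ gives
\begin{align*}\delta^{\,n}\ \le\ \frac{|W(\zeta)|}{\Vert\widehat g_1(\zeta)\Vert\cdots\Vert\widehat g_{n+1}(\zeta)\Vert}\ \le\ 1\qquad(\zeta\in\C).\end{align*}

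The finish is potential-theoretic. Since $W$ is entire and zero-free, $\log|W|$ is harmonic on $\C$. For each $j$, $u_j:=\log\Vert\widehat g_j\Vert^{2}$ is smooth and subharmonic (being plurisubharmonic, as the logarithm of a sum of squared moduli of holomorphic functions), and $\Delta u_j\equiv 0$ on $\C$ if and only if $g_j$ is constant, since $\Delta u_j$ is, up to a positive constant, the density of the nonnegative pull-back $g_j^{*}\omega_{FS}$, which vanishes identically exactly when $g_j$ is constant. Taking logarithms in the displayed inequality, $\phi:=\log|W|-\tfrac12\sum_{j=1}^{n+1}u_j$ satisfies $n\log\delta\le\phi\le 0$, hence is bounded, while $\Delta\phi=-\tfrac12\sum_{j=1}^{n+1}\Delta u_j\le 0$; so $-\phi$ is subharmonic and bounded above on $\C$, hence constant. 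Therefore $\sum_{j=1}^{n+1}\Delta u_j\equiv 0$, and since the summands are nonnegative, $\Delta u_j\equiv 0$ for every $j$, i.e.\ $g_1,\dots,g_{n+1}$ are constant; applying this to all $(n+1)$-subsets of $\{1,\dots,q\}$ makes every $g_j$ constant, the desired contradiction, so $\mathcal H$ is normal. I expect the one genuinely delicate step to be the reduction in the first paragraph --- extracting from the non-normality of the product family $\mathcal H$ a single rescaled limit that is nonconstant in some coordinate while still inheriting the $d_{FS}$-separation --- which is precisely what the Zalcman lemma, read through the Segre embedding, supplies; everything afterwards is the short argument above.
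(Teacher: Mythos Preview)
Your argument is correct, and the endgame (the inequality $\delta^n\le |\det(\widehat g_1,\dots,\widehat g_{n+1})|/\prod_j\Vert\widehat g_j\Vert\le 1$ forcing all $g_j$ to be constant) is exactly the paper's, only phrased potential-theoretically instead of via Jensen's formula and characteristic functions: the paper integrates $\log|\det|\ge\sum_j\log\Vert L_j\Vert+n\log\delta$ over circles and reads off $O(1)\ge\sum_j T_{L_j}(r)$, which is the same Liouville statement you invoke for the bounded superharmonic $\phi$. Where you genuinely diverge is in obtaining the \emph{simultaneous} rescaled limits. The paper works one family at a time: assuming $\{H_{1\alpha}\}$ is not normal, it runs an induction on $s$, at each step either passing to a subsequence (if the next rescaled family is already normal) or applying Zalcman again to that family and re-centering, so that after $q$ steps all $H_{j\alpha_k}(z_k+\rho_ku_k\zeta)$ converge with at least one nonconstant limit. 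You instead bundle everything into one map into $(P^n(\C))^q$, push through the Segre embedding into a single $P^N(\C)$, and invoke Lemma~\ref{Zalcman} once; the nonconstant limit then lands in the (closed) Segre image and unpacks into $(g_1,\dots,g_q)$. Your route is cleaner and avoids the inductive bookkeeping, at the cost of a short justification that normality is preserved under the Segre biholomorphism onto a compact submanifold (equivalently, that the restricted Fubini--Study metric and the product metric are uniformly comparable on the compact image), which you correctly flag as the one point needing care. The paper's route is more self-contained in that it never leaves $P^n(\C)$, but the iterated re-centering is the price.
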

\begin{proof} Suppose that there is an index $j\in\{1,\dots, q\}$ such that $\{H_{j\alpha}\}_{\alpha\in\mathcal A}$ is not normal on $D,$ say $j=1.$
By induction, we prove the following claim:
For each $s\in\{1,\dots,q\},$  there exist  sequences $\{\alpha_k\}_{k=1}^\infty\subset\mathcal A,$   $\{z_{k}\}\subset D$ with $z_{k}\to a\in D,$   $\{\rho_{k}\}\subset\R$ with $\rho_{k}\to 0^+,$ and
Euclidean unit vectors $\{u_{k}\}\subset \C^m,$ such that for all $j\in\{1,\dots,s\},$  $H_{j,k}(\zeta) := H_ {j\alpha_{k}}(z_{k} + \rho_{k} u_{k}\zeta),$ where $\zeta\in\C$
satisfies $z_{k}+ \rho_{k} u_{k}\zeta \in D,$ converges uniformly on compact subsets of $\C$ to a
holomorphic mapping $L_j$ of $\C$ into $P^n(\C),$ where at least one of $L_1,\dots, L_s$ is nonconstant.

The case $s=1$ is just Lemma \ref{Zalcman}.

Assume that the claim is true for some $s\in\{1,\dots,q-1\},$ we prove that it holds for  $s+1.$ By the induction hypothesis, there exist  sequences $\{\alpha_k\}_{k=1}^\infty\subset\mathcal A,$   $\{z'_{k}\}\subset D$ with $z'_{k}\to a\in D,$   $\{\rho'_{k}\}\subset\R$ with $\rho'_{k}\to 0^+,$ and
Euclidean unit vectors $\{u_{k}\}\subset \C^m,$ such that for all $j\in\{1,\dots,s\},$  $H_{j,k}(\zeta) := H_ {j\alpha_{k}}(z'_{k} + \rho'_{k} u_{k}\zeta),$ where $\zeta\in\C$
satisfies $z'_{k}+ \rho'_{k} u_{k}\zeta \in D,$ converges uniformly on compact subsets of $\C$ to a
holomorphic mapping $L_j$ of $\C$ into $P^n(\C),$ where at least one of $L_1,\dots, L_s$ is nonconstant.

We consider the sequence $H_{s+1, k}(\zeta):=H_ {(s+1)\alpha_{k}}(z'_{k} + \rho'_{k} u_{k}\zeta),$ where $\zeta\in\C$
satisfies $z'_{k}+ \rho'_{k} u_{k}\zeta \in D.$

 If $\{H_{s+1,k}\}_{k=1}^\infty$ is normal on $\C,$ then by replacing by an appropriate subsequence, without loss of generality, we assume that  $H_{s+1,k}$ converges uniformly on compact subsets of $\C$ to a
holomorphic mapping $L_{s+1}$ of $\C$ into $P^n(\C).$ Hence, in this case, combining with the induction hypothesis, we get that  the claim is also true for $p+1.$

 If $\{H_{s+1, k}\}_{k=1}^\infty$ is not normal on $\C,$ then by Lemma \ref{Zalcman}, there exist   a subsequence  of $\{H_{s+1,k}\}_{k=1}^\infty$ which without loss of generality we also denote by $\{H_{s+1,k}\}_{k=1}^\infty$ and sequences $\{\xi'_{k}\}\subset \C$ with $\xi'_{k}\to \xi^0\in \C,$   $\{t_{k}\}\subset\R$ with $t_{k}\to 0^+,$
 such that  $h_{s+1,k}(\zeta) :=H_{s+1,k}(\xi'_k+t_k\zeta)= H_ {(s+1)\alpha_{k}}(z'_{k}+\rho'_k\xi'_ku_k + \rho'_{k}t_k u_{k}\zeta)$ converges uniformly on compact subsets of $\C$ to a
nonconstant holomorphic mapping $L_{s+1}$ of $\C$ into $P^n(\C).$ Set $z_k:=z'_{k}+\rho'_k\xi'_ku_k$, $\rho_k:=\rho'_{k}t_k$. Then $z_k\to a,$  $\rho_k\to 0^+$, $h_{s+1,k}(\zeta) = H_ {(s+1)\alpha_{k}}(z_{k} + \rho_{k} u_{k}\zeta)$ converges uniformly on compact subsets of $\C$ to a
nonconstant holomorphic mapping $L_{s+1}$ of $\C$ into $P^n(\C)$, and $h_{j,k}(\zeta) := H_{j,k}(\xi'_k+t_k\zeta)=H_ {j\alpha_{k}}(z_{k} + \rho_{k} u_{k}\zeta)$  converges uniformly on compact subsets of $\C$ to the point $L_j(\xi^0),$ for all $j\in\{1,\dots,s\}$ (note that $H_{j,k}\to L_j$ and $\xi'_k\to\xi^0$). Therefore,  the claim is true for $p+1.$

\noindent By induction, we get the claim.

In our claim, without loss of generality, we assume that $L_1$ is nonconstant.
Take a ball $B(a,r):=\{z: \Vert z-a\Vert \leq r\}\subset D,$ for some $r>0$ (note that $a\in D$).  By the assumption, there is a constant $\delta>0$ such that $d_{FS}(H_{1\alpha}(z),\dots,H_{(n+1)\alpha}(z))\geq \delta$  for all $z\in K,$ $\alpha\in\mathcal A.$ \\
For each $\zeta\in \C$, it is clear that $z_{k} + \rho_{k} u_{k}\zeta\in B(a,r)$ for all  $k$ sufficiently large.
Hence, by our above claim and by Lemma \ref{Cherry}, we have

\begin{align*}& \frac{\left|\det\left(L_1(\zeta),\dots, L_{n+1}(\zeta)\right)\right|}{\Vert L_1(\zeta)\Vert\cdots\Vert L_{n+1}(\zeta)\Vert}
=\lim_{k\to\infty}\frac{\left|\det\left(H_{1,k}(\zeta),\dots, H_{(n+1),k}(\zeta)\right)\right|}{\Vert H_{1,k}(\zeta)\Vert\cdots\Vert H_{(n+1),k}(\zeta)\Vert} \notag\\
&\quad=\lim_{k\to\infty}\frac{\left|\det\left(H_ {1\alpha_{k}}(z_{k} + \rho_{k} u_{k}\zeta),\dots, H_ {(n+1)\alpha_{k}}(z_{k} + \rho_{k} u_{k}\zeta)\right)\right|}{\Vert H_ {1\alpha_{k}}(z_{k} + \rho_{k} u_{k}\zeta)\Vert\cdots \Vert H_ {(n+1)\alpha_{k}}(z_{k} + \rho_{k} u_{k}\zeta)\Vert}\notag\\
&\quad\geq\lim_{k\to\infty}\left[ d_{FS}(H_ {1\alpha_{k}}(z_{k} + \rho_{k} u_{k}\zeta),\dots, H_ {(n+1)\alpha_k}(z_{k} + \rho_{k} u_{k}\zeta)\right]^n\notag\\
&\quad\geq \delta^n.
\end{align*}
 This implies that $\det\left(L_1(\zeta),\dots, L_{n+1}(\zeta)\right)$ is nowhere vanishing, and
\begin{align}\label{8.1}
\log\left|\det\left(L_1(\zeta),\dots, L_{n+1}(\zeta)\right)\right|\geq\sum_{i=1}^{n+1}\log\Vert L_i(\zeta)\Vert+n\log\delta.
\end{align}
Applying integration on both sides of (\ref{8.1}) and using Jensen's Lemma, we get
\begin{align*}
0=N(r,\left(\det\left(L_1,\dots, L_{n+1}\right)\right)_0)\geq \sum_{i=1}^{n+1}T_{L_i}(r)-O(1).
\end{align*}
for all $r>1.$ This contradicts to the assumption that $L_1$ is nonconstant.
\end{proof}
The  proof of Theorem \ref{Normal} is  based on the Zalcman lemma, Lemma \ref{tan}, and a notice given by Green in \cite{G}.   In \cite{T}, we also given some applications of Lemma \ref{tan} in the normal problem    concerning the condition of uniform boundedness  of  tangent mappings.\\

\noindent{\bf Proof of Theorem \ref{Normal}.}
Suppose that $\mathcal F$ is not normal, then by Lemma~\ref{Zalcman} there exist sequences $\{z_k\}\subset D$ with $z_k\to z_0\in D,$  $\{f_k\}\subset\mathcal F$, $\{\rho_k\}\subset\R$ with $\rho_k\to 0^+,$ and
Euclidean unit vectors ${u_k}\subset \C^m,$ such that $g_k(\zeta) := f_ k(z_k + \rho_k u_k\zeta),$ where $\zeta\in\C$
satisfies $z_k+ \rho_k u_k\zeta\in D,$ converges uniformly on compact subsets of $\C$ to a nonconstant
holomorphic mapping $g$ of $\C$ into $P^n(\C).$

 By Lemma \ref{tan},   $\{H_{1f_k}\}_{k=1}^\infty,\dots, \{H_{(2n+1)f_k}\}_{k=1}^\infty$ are normal families on $D.$
By replacing by subsequences, without loss of generality, we  assume that $\{H_{jf_k}\}_{k=1}^\infty$ ($1\leq j\leq 2n+1)$ converges uniformly on compact subsets of $D$ to a nonconstant
holomorphic mapping $h_j$ of $D$ into $P^n(\C).$

We take  reduced representations $\widehat {h_j}=(a_{j0},\dots, a_{jn})$ of $h_j$, $\widehat {f_k}=(f_{k0},\dots, f_{kn})$ of $f_k$ and $\widehat{H_{jf_k}}=(a_{jk0},\dots,a_{jkn})$ of $H_{jf_k}$ $(j=1,2,\dots,2n+1)$  in a neighbourhood $V_{z_0}$ of $z_0$ such that $\{a_{jki}\}_{k=1}^\infty$ converges uniformly  on  compact subsets of  $V_{z_0}$ to $a_{ji}$ ($i=0,\dots,n)$.

We consider hyperplanes $H_j: a_{j0}(z_0)\omega_0+\cdots+a_{jn}(z_0)\omega_n=0\;  (j=1,\dots,2n+1)$ in $P^n(\C).$

 Take a closed ball $B(z_0, R)=\{z: \Vert z-z_0\Vert \leq R\}\subset D.$
 By the assumption and by Lemma \ref{Cherry}, there is a positive constant $\delta_{B(z_0,R)}$ such that for all subsets $\{j_0,\dots, j_n\}\subset\{1,\dots,2n+1\}$ we have
 \begin{align*} \frac{\left|\det (a_{j_si}(z_0))_{0\leq s,i\leq n})\right|}{\Vert H_{j_0}\Vert\cdots \Vert H_{j_n}\Vert}&=\lim_{k\to\infty}
 \frac{\left|\det \left(H_{j_0f_k}(z_0),\dots,H_{j_nf_k}(z_0)\right)\right|}{\Vert H_{j_0f_k}(z_0)\Vert\cdots\Vert H_{j_nf_k}(z_0)\Vert}\\
 &\geq [d_{FS}(H_{j_0f_k}(z_0),\dots,H_{j_nf_k}(z_0))]^n\\
 &\geq \delta_{B(z_0, R)}^n>0.
 \end{align*}
Hence, $\det (a_{j_si}(z_0))_{0\leq s,i\leq n})\ne 0,$ for all subsets $\{j_0,\dots, j_n\}\subset\{1,\dots,2n+1\}$. Therefore, $H_1,\dots,H_{2n+1}$ are in general position.

For each $j\in\{1,\dots,2n+1\},$ by Hurwitz's theorem $g(\C)\subset H_j$ or $g(\C)\cap H_j=\varnothing;$ this is impossible, by the notice given by Green (\cite{G}, p. 112), there are no non-constant holomorphic maps of $\C$ into $(H_{i_1}\cap\cdots \cap H_{i_p})\setminus(H_{i_{p+1}}\cup\cdots\cup H_{i_{2n+1}})$, where $(i_1,\dots,i_{2n+1})$ is a permutation of $(1,\dots,2n+1)$.

We have completed the proof of Theorem \ref{Normal}.
\hfill$\square$
\section{The high-dimensional version of Lappan's theorem}
Let $ f = (f_0 :\cdots: f_n)$ be a holomorphic map from a domain in $\C$ to $P^n(\C)$ given by homogeneous coordinate functions $ f_j$ which are holomorphic without common zeros.  We have the following formula  for the Fubini-Study derivative $f^{\#}$ of $f$ (for details, see \cite{CE})
\begin{align}\label{2020}(f^{\#})^2:=\frac{\partial^2}{\partial z \partial \overline{z}}\log\sum_{i=0}^n|f_i|^2=\frac{\sum\limits_{0\leq s<t\leq n}\begin{vmatrix}f_s&f_t\\f_s'&f_t'\end{vmatrix}^2}{\Vert f\Vert^4}.
\end{align}
We shall prove the following high-dimensional version of Lappan's theorem.
\begin{theorem}\label{Lappan} Let $f$ be a holomorphic mapping of $\triangle$ into $P^n(\C),$ and let $H_1,H_2,\dots,H_q$ be hyperplanes in general position  in $P^n(\C).$ Assume that
\begin{align*}\sup\{(1-|z|^2)f^{\#}(z): z\in \cup_{j=1}^q f^{-1}(H_j)\}<\infty.
\end{align*}
If $q\geq n(2n+1)+2$  then $f$ is normal.
\end{theorem}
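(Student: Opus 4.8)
The plan is to argue by contradiction: if $f$ is not normal, rescale it to a nonconstant entire curve $g\colon\C\to P^n(\C)$ and derive a contradiction from Nochka's second main theorem. The key point, and the source of the threshold $n(2n+1)+2$, is that the hypothesis does \emph{not} merely force the zeros of $H_j(g)$ to have multiplicity $\geq 2$ — which is useless once $n\geq 2$, because the truncation level $n$ in the second main theorem is unavoidable, as the example in the Introduction shows — but forces the sharper ``first--order'' ramification $g(\xi_0)\wedge g'(\xi_0)=0$ at every $\xi_0\in\bigcup_j g^{-1}(H_j)$; this is precisely what can be absorbed by Cartan's first associated curve.

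First I would observe that if $f$ is not normal, then the family $\mathcal G:=\{f\circ\varphi:\varphi\in\mathrm{Aut}(\triangle)\}$ is not normal (by Lehto--Virtanen; and since $(1-|z|^2)|\varphi'(z)|=1-|\varphi(z)|^2$, every $f\circ\varphi$ obeys the hypothesis with the same constant $M$). By Lemma~\ref{Zalcman} there are $z_k\to z_0\in\triangle$, $\varphi_k\in\mathrm{Aut}(\triangle)$, $\rho_k\to 0^+$ and unimodular $u_k$ such that $g_k(\xi):=(f\circ\varphi_k)(z_k+\rho_ku_k\xi)$ converges locally uniformly to a nonconstant holomorphic $g\colon\C\to P^n(\C)$. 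Because $g_k^{\#}(\xi)=\rho_k\,(f\circ\varphi_k)^{\#}(z_k+\rho_ku_k\xi)$ and the argument stays in a fixed compact subset of $\triangle$ (where $1-|\cdot|^2$ is bounded below), the hypothesis forces $g_k^{\#}(\xi)\to 0$, uniformly on compacta, over $\{\xi:g_k(\xi)\in H_j\}$. Together with Hurwitz's theorem and $g_k^{\#}\to g^{\#}$, this yields $g^{\#}(\xi_0)=0$, i.e. $g(\xi_0)\wedge g'(\xi_0)=0$ by~(\ref{2020}), for every $j$ with $g(\C)\not\subset H_j$ and every $\xi_0\in g^{-1}(H_j)$.

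Next I would reduce to a linearly nondegenerate situation. Let $V\cong P^{n'}(\C)$, $1\leq n'\leq n$, be the projective span of $g(\C)$; then $g$ is linearly nondegenerate into $V$, at most $n-n'$ of the $H_j$ contain $V$ (general position), and for the remaining $q'\geq q-(n-n')$ ones the hyperplanes $\widetilde H_j:=H_j\cap V$ of $V$ are in $n$--subgeneral position (any point lies on at most $n$ of the $H_j$). Let $\Phi$ be a greatest common divisor of the coordinate functions of $g\wedge g'$ — a nonzero entire function, since $g$ is nonconstant. By the previous step $\Phi$ vanishes on $\bigcup_j g^{-1}(\widetilde H_j)$, and each point there lies on at most $n$ of the $\widetilde H_j$, so $\sum_{j=1}^{q'}N_g^{[1]}(r,\widetilde H_j)\leq n\,N(r,(\Phi)_0)$. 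The technical heart is then the estimate $N(r,(\Phi)_0)\leq 2T_g(r)+o(T_g(r))$ (the first Pl\"ucker inequality): $\Phi$ divides some minor $g_sg_t'-g_tg_s'$, and from $\bigl(\sum_{s<t}|g_sg_t'-g_tg_s'|^2\bigr)^{1/2}=g^{\#}\Vert g\Vert^2$, the elementary bound $g^{\#}\leq C_n\max_i|g_i'/g_i|$, the lemma on the logarithmic derivative, and Jensen's formula one gets $N(r,(\Phi)_0)\leq \frac{1}{2\pi}\int_0^{2\pi}\log|g_sg_t'-g_tg_s'|(re^{i\theta})\,d\theta+O(1)\leq 2T_g(r)+o(T_g(r))$.

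Finally I would apply Nochka's second main theorem to $g\colon\C\to P^{n'}(\C)$ and $\widetilde H_1,\dots,\widetilde H_{q'}$ in $n$--subgeneral position ($\kappa=n\geq n'$, and $q'\geq 2n-n'+1$ holds by the hypothesis on $q$), together with $N_g^{[n']}\leq n'\,N_g^{[1]}$:
\begin{align*}
(q'-2n+n'-1)\,T_g(r)&\leq\sum_{j=1}^{q'}N_g^{[n']}(r,\widetilde H_j)+o(T_g(r))\\
&\leq n'\sum_{j=1}^{q'}N_g^{[1]}(r,\widetilde H_j)+o(T_g(r))\leq 2nn'\,T_g(r)+o(T_g(r)).
\end{align*}
Since $g$ is nonconstant, $T_g(r)\to\infty$, so $q'-2n+n'-1\leq 2nn'$; hence $q\leq q'+(n-n')\leq 2nn'+3n-2n'+1\leq 2n^2+n+1=n(2n+1)+1$ (using $n'\leq n$), contradicting $q\geq n(2n+1)+2$. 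I expect the delicate step to be the second paragraph — carrying the bound on $f^{\#}$, which is imposed only over $\bigcup_j f^{-1}(H_j)$, through the rescaling to the identity $g\wedge g'=0$ on $\bigcup_j g^{-1}(H_j)$ — followed by the realization that it is this first--order ramification, and not a multiplicity count, that must be fed into the second main theorem via $N(r,(\Phi)_0)\leq 2T_g(r)+o(T_g(r))$.
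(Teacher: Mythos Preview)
Your argument is correct and follows essentially the same route as the paper: rescale to a nonconstant entire curve $g$, use Hurwitz and the hypothesis to force $g^{\#}=0$ on $\bigcup_j g^{-1}(H_j)$, reduce to the linear span, and feed the resulting $N\!\leq\! 2T$ bound into Nochka's second main theorem. The only cosmetic differences are that the paper uses Hahn's rescaling lemma (Lemma~\ref{Hahn}) in place of your $\mathrm{Aut}(\triangle)$-orbit plus Lemma~\ref{Zalcman}, and in its Lemma~\ref{SMT} it chooses a coordinate $L_0$ avoiding $\bigcup_j g^{-1}(H_j)$ and works with $(L_t(g)/L_0(g))'$ rather than the GCD $\Phi$ of $g\wedge g'$; both devices yield the same estimate $\sum_j N_g^{[1]}(r,H_j)\leq 2\kappa\,T_g(r)+o(T_g(r))$ and hence the same threshold $n(2n+1)+1$.
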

The counterpart result for holomorphic mappings of $\C$ into  $P^n(\C).$
\begin{theorem}\label{L} Let $f$ be a holomorphic mapping of $\C$ into $P^n(\C),$ and let $H_1,H_2,\dots,H_q$ be hyperplanes in general position  in $P^n(\C).$ Assume that
\begin{align*}\sup\{f^{\#}(z): z\in \cup_{j=1}^q f^{-1}(H_j)\}<\infty.
\end{align*}
If $q\geq n(2n+1)+2$, then $f^{\#}$ is upper bounded on $\C$.
\end{theorem}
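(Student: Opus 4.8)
The plan is to argue by contradiction and to manufacture, from the failure of $f^{\#}$ to be bounded, a nonconstant entire curve $g\colon\C\to P^n(\C)$ whose preimages over the $H_j$ are so heavily ramified that they contradict Nochka's second main theorem. Suppose $\sup_{\C}f^{\#}=\infty$. Then the family $\mathcal G=\{f(\cdot+a):a\in\C\}$ of translates of $f$ is not normal on the unit disc $\triangle$: were it normal, the Fubini--Study derivatives of its members would be uniformly bounded on $\{|z|\le\tfrac12\}$, which at $z=0$ would bound $f^{\#}$ on all of $\C$. Applying Lemma~\ref{Zalcman} to $\mathcal G$ would then yield $a_k\in\C$, $z_k\to z_0\in\triangle$, $\rho_k\to0^{+}$ and Euclidean unit vectors $u_k\in\C$ such that, writing $w_k:=z_k+a_k$, the maps $g_k(\zeta):=f(w_k+\rho_k u_k\zeta)$ converge locally uniformly on $\C$ to a nonconstant holomorphic $g\colon\C\to P^n(\C)$; note that $g_k^{\#}(\zeta)=\rho_k\,f^{\#}(w_k+\rho_k u_k\zeta)$.

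Next I would show the limit curve is stationary over the $H_j$. Fix $j$ with $g(\C)\not\subset H_j$ and choose local reduced representations so that $\langle\widehat{g_k},\widehat{H_j}\rangle\to\langle\widehat g,\widehat{H_j}\rangle\not\equiv0$ locally uniformly. By Hurwitz's theorem, for each $\zeta_0\in g^{-1}(H_j)$ there are $\zeta_k\to\zeta_0$ with $\langle\widehat{g_k}(\zeta_k),\widehat{H_j}\rangle=0$, i.e. $w_k+\rho_k u_k\zeta_k\in f^{-1}(H_j)$; since $f^{\#}\le M$ on $\cup_j f^{-1}(H_j)$, $g_k^{\#}(\zeta_k)=\rho_k\,f^{\#}(w_k+\rho_k u_k\zeta_k)\le\rho_k M\to0$, hence $g^{\#}(\zeta_0)=0$. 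Thus, with $\widehat g=(g_0,\dots,g_n)$ a reduced representation of $g$, the set $\bigcup_{j:\,g(\C)\not\subset H_j}g^{-1}(H_j)$ lies in the support of the divisor $R:=(\widehat g\wedge\widehat g')_0$, the common zero divisor of the $2\times2$ minors $g_sg_t'-g_s'g_t$; in a chart with $g(\zeta_0)=(1{:}0{:}\cdots{:}0)$, the vanishing of $g^{\#}$ moreover forces $g_i'(\zeta_0)=0$ for $i\ge1$ and $a_{j0}=0$, so $\langle\widehat g,\widehat{H_j}\rangle$ vanishes to order $\ge2$ at every point of $g^{-1}(H_j)$.

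The core of the argument is the reduction and the second main theorem. Let $P^{\ell}(\C)$ be the smallest linear subspace containing $g(\C)$, so $1\le\ell\le n$ and $g$ is linearly nondegenerate as a map into $P^{\ell}(\C)$; put $J:=\{j:g(\C)\not\subset H_j\}$ and $\widetilde H_j:=H_j\cap P^{\ell}(\C)$ for $j\in J$. General position of $H_1,\dots,H_q$ in $P^n(\C)$ forces the $\widetilde H_j$ $(j\in J)$ to be in $n$-subgeneral position in $P^{\ell}(\C)$, and at most $n-\ell$ of the $H_j$ can contain $P^{\ell}(\C)$, so $|J|\ge q-(n-\ell)$. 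Standard Nevanlinna-theoretic estimates — Jensen's formula together with the logarithmic derivative lemma for holomorphic curves, applied to $\Vert\widehat g\wedge\widehat g'\Vert\le 2\Vert g\Vert\,\Vert g'\Vert$ — give
\[N(r,R)\le 2T_g(r)+o(T_g(r)).\]
Since at most $n$ of the $\widetilde H_j$ meet at any point ($n$-subgeneral position) and every zero of every $\langle\widehat g,\widehat{H_j}\rangle$ $(j\in J)$ lies in the support of $R$, summing gives $\sum_{j\in J}N_g^{[1]}(r,\widetilde H_j)\le n\,N(r,R)\le 2n\,T_g(r)+o(T_g(r))$, and hence
\[\sum_{j\in J}N_g^{[\ell]}(r,\widetilde H_j)\le\ell\sum_{j\in J}N_g^{[1]}(r,\widetilde H_j)\le 2n\ell\,T_g(r)+o(T_g(r)).\]
Nochka's second main theorem for the linearly nondegenerate $g\colon\C\to P^{\ell}(\C)$ and the $n$-subgeneral family $\{\widetilde H_j\}_{j\in J}$ then yields $(|J|-2n+\ell-1)T_g(r)\le\sum_{j\in J}N_g^{[\ell]}(r,\widetilde H_j)+o(T_g(r))$; dividing by $T_g(r)\to\infty$ gives $|J|\le 2n\ell+2n-\ell+1$. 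Combined with $|J|\ge q-(n-\ell)$ this forces $q\le(2n-2)\ell+3n+1$, which is maximal at $\ell=n$ and hence $q\le n(2n+1)+1$ — contradicting $q\ge n(2n+1)+2$.

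The main obstacle is exactly the one emphasised in the introduction. Feeding Nochka's theorem only the information that each $\langle\widehat g,\widehat{H_j}\rangle$ has zeros of multiplicity $\ge2$ is worthless once $\ell\ge2$, because that theorem truncates counting functions at level $\ell$ (and, as the curve $g=(u^n:\cdots:u^0)$ shows, the level-one second main theorem in general position is false in dimension $\ge2$). The step I expect to be delicate is therefore to extract and use the \emph{full} force of ``$g^{\#}\equiv0$ on the preimages'': this confines $\bigcup_{j\in J}g^{-1}(\widetilde H_j)$ into the ramification divisor $R$, and the decisive point is the thinness of $R$ — the bound $N(r,R)\le 2T_g(r)+o(T_g(r))$, which is insensitive to how large the individual multiplicities are. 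Getting the constant in that estimate right, and accounting for the loss incurred when $g$ degenerates into a $P^{\ell}(\C)$ (where only $n$-subgeneral position survives), is what I expect to pin the threshold at precisely $q\ge n(2n+1)+2$; the parallel statement for $\triangle$ (Theorem~\ref{Lappan}) should follow the same path with the Lohwater--Pommerenke rescaling replacing the translate-family argument.
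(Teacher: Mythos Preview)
Your argument is correct and follows the same architecture as the paper's proof of the companion Theorem~\ref{Lappan} (the paper does not write out a separate proof of Theorem~\ref{L}): a Zalcman-type rescaling to produce a nonconstant limit $g$, passage to the linear span $P^\ell(\C)$ of $g(\C)$ where $g$ is nondegenerate and the restricted hyperplanes are in subgeneral position, the key estimate $\sum_{j\in J}N_g^{[1]}(r,\widetilde H_j)\le 2\kappa\,T_g(r)+o(T_g(r))$, and Nochka's second main theorem to force $q\le n(2n+1)+1$.

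The one genuine difference is in how the key estimate is obtained. You funnel all preimages into the ramification divisor $R=(\widehat g\wedge\widehat g')_0$ and invoke $N(r,R)\le 2T_g(r)+o(T_g(r))$. The paper (Lemma~\ref{SMT}) instead chooses an auxiliary hyperplane $L_0$ with $g^{-1}(L_0)$ disjoint from $\bigcup_j g^{-1}(H_j)$, picks $t$ with $\varphi:=L_t(g)/L_0(g)$ nonconstant, observes $\varphi'=0$ on $\bigcup_j g^{-1}(H_j)$, and bounds $N(r,(\varphi')_0)\le T_{\varphi'}(r)\le 2T_\varphi(r)+o(T_\varphi(r))\le 2T_g(r)+o(T_g(r))$ via the classical logarithmic-derivative lemma for a single meromorphic function. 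The paper's device is a bit more elementary---it sidesteps any direct control of $\Vert\widehat g'\Vert$---and it also records the sharper subgeneral-position index $\kappa=n-(q-q_0)$ rather than your $\kappa=n$; but both routes deliver exactly the same inequality and the same threshold.
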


\begin{lemma}[\cite{H}, Theorem 4] \label{Hahn} The holomorphic mapping $f:\triangle\to P^n(\C)$ is not normal if and only if there exist  sequences $\{z_k\}\subset\triangle,$ $\{r_k\}\subset\R,$ $r_k>0,$ with $\lim_{k\to\infty}\frac{r_k}{1-|z_k|}=0$ such that $g_k(\xi):=f(z_k+ r_k\xi)$ converges  uniformly on compact subsets of $\C$ to a non-constant holomorphic mapping $g$ of $\C$ into $P^n(\C).$
\end{lemma}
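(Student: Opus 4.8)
The plan is to reduce the statement to the higher-dimensional analogue of the Lehto--Virtanen characterization, namely that $f:\triangle\to P^n(\C)$ is normal if and only if
\[
M(f):=\sup_{z\in\triangle}(1-|z|^2)f^{\#}(z)<\infty,
\]
and then to prove both implications by comparing the Fubini--Study derivative of $f$ with that of its rescalings. The one ingredient about the rescaled maps I will use is Marty's criterion in this setting: a sequence of holomorphic maps of a domain in $\C$ into $P^n(\C)$ is normal whenever the associated Fubini--Study derivatives are uniformly bounded on every compact subset; this follows from formula (\ref{2020}) together with the standard equicontinuity argument. Throughout I will use the scaling identity, a direct consequence of (\ref{2020}), that for $\psi(\xi)=z_k+r_k\xi$ with $r_k>0$ one has $(f\circ\psi)^{\#}(\xi)=r_k\,f^{\#}(z_k+r_k\xi)$.

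For the implication ``$\Leftarrow$'', suppose the rescalings $g_k(\xi)=f(z_k+r_k\xi)$ converge locally uniformly to a nonconstant $g$. Then $g_k^{\#}\to g^{\#}$ locally uniformly, and since $g$ is nonconstant there is a point $\xi_0$ with $g^{\#}(\xi_0)>0$; hence $r_k\,f^{\#}(w_k)\to g^{\#}(\xi_0)>0$ where $w_k:=z_k+r_k\xi_0$. Because $r_k/(1-|z_k|)\to0$, the estimate $1-|w_k|\ge (1-|z_k|)-r_k|\xi_0|\ge\tfrac12(1-|z_k|)$ holds for large $k$, so
\[
(1-|w_k|^2)f^{\#}(w_k)\ge (1-|w_k|)f^{\#}(w_k)\ge \frac{1-|z_k|}{2r_k}\,\big(r_k f^{\#}(w_k)\big)\longrightarrow+\infty .
\]
Thus $M(f)=\infty$ and $f$ is not normal.

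For the implication ``$\Rightarrow$'', the heart of the argument is a Lohwater--Pommerenke/Zalcman rescaling. If $f$ is not normal then $M(f)=\infty$, so there is a sequence $\{w_k\}\subset\triangle$ with $(1-|w_k|^2)f^{\#}(w_k)\to\infty$. On the closed disc $\overline{\triangle}_k=\{z:|z-w_k|\le\tfrac12(1-|w_k|)\}$ I consider the continuous function
\[
\Psi_k(z):=\Big(\tfrac12(1-|w_k|)-|z-w_k|\Big)f^{\#}(z),
\]
which vanishes on $\partial\triangle_k$ and is nonnegative, hence attains its maximum $L_k$ at an interior point $z_k$. Writing $d_k:=\tfrac12(1-|w_k|)-|z_k-w_k|>0$, so that $L_k=d_k f^{\#}(z_k)$, one checks $L_k\ge\Psi_k(w_k)=\tfrac12(1-|w_k|)f^{\#}(w_k)\to\infty$. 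I then set $r_k:=1/f^{\#}(z_k)$, so $r_k=d_k/L_k\to0$, and from $1-|z_k|\ge\tfrac12(1-|w_k|)\ge d_k$ I obtain the crucial normalization $r_k/(1-|z_k|)\le 1/L_k\to0$. To see that $g_k(\xi)=f(z_k+r_k\xi)$ is a normal sequence, note that for $|\xi|\le R$ and $k$ large the maximality $\Psi_k(z_k+r_k\xi)\le L_k$ together with $\tfrac12(1-|w_k|)-|z_k+r_k\xi-w_k|\ge d_k-r_k|\xi|$ yields
\[
g_k^{\#}(\xi)=r_k f^{\#}(z_k+r_k\xi)\le\frac{d_k}{d_k-r_k|\xi|}=\frac{1}{1-|\xi|/L_k}\le\frac{1}{1-R/L_k},
\]
which stays bounded on $|\xi|\le R$; by Marty's criterion a subsequence of $\{g_k\}$ converges locally uniformly to a holomorphic $g:\C\to P^n(\C)$. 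Finally $g_k^{\#}(0)=r_k f^{\#}(z_k)=1$ forces $g^{\#}(0)=1$, so $g$ is nonconstant, completing this direction.

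The main obstacle is the ``$\Rightarrow$'' direction, and specifically the simultaneous achievement of three competing demands in the rescaling: the limit must be nonconstant (forced by the normalization $g_k^{\#}(0)=1$), the rescaled family must be normal (forced by the uniform bound on $g_k^{\#}$), and the scale must satisfy $r_k/(1-|z_k|)\to0$ (so that the limit lives on all of $\C$ rather than merely reflecting boundary behaviour). The weighted maximization via $\Psi_k$ is precisely what reconciles these, and the only point requiring care beyond dimension one is that all spherical-derivative manipulations must be read through the Fubini--Study formula (\ref{2020}); since that formula makes $f^{\#}$ an honest nonnegative continuous function obeying the same chain rule as in the classical case, the one-dimensional scheme transfers without further complication.
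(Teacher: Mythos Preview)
The paper does not give its own proof of this lemma: it is quoted verbatim as Theorem~4 of Hahn~\cite{H} and used as a black box. So there is no in-paper argument to compare against; your proposal supplies what the paper omits.

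Your argument is correct. It is the standard Lohwater--Pommerenke/Zalcman rescaling, transported to $P^n(\C)$ via the Fubini--Study derivative. A few remarks on points you pass over quickly, none of which is a gap: (i) the equivalence $f$ normal $\Leftrightarrow M(f)<\infty$ that you invoke at the outset is itself a result (the higher-dimensional Lehto--Virtanen/Marty theorem, also in~\cite{H}); since you already sketch Marty's criterion and the compactness of $P^n(\C)$ gives the Ascoli--Arzel\`a side, this is fine, but it is worth flagging as an input rather than a triviality. (ii) In the ``$\Leftarrow$'' direction you use that local uniform convergence $g_k\to g$ in $P^n(\C)$ forces $g_k^{\#}\to g^{\#}$; this is justified by passing to local reduced representations that converge together with their derivatives, and then reading off the convergence from~(\ref{2020}). (iii) In the ``$\Rightarrow$'' direction, the maximizer $z_k$ of $\Psi_k$ is interior because $L_k\to\infty$ while $\Psi_k$ vanishes on the boundary, so $f^{\#}(z_k)>0$ and $r_k=1/f^{\#}(z_k)$ is well defined for large $k$; you implicitly use this and it checks out. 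The chain of inequalities leading to $g_k^{\#}(\xi)\le (1-|\xi|/L_k)^{-1}$ and the normalization $g_k^{\#}(0)=1$ are exactly the classical computation and go through unchanged once $f^{\#}$ is interpreted via~(\ref{2020}).
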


\begin{lemma}\label{SMT} Let $f$ be a linearly non-degenerate  holomorphic mapping of $\C$ into $P^n(\C).$  Let $H_1,\dots,H_q$ be $q$ hyperplanes  in $\kappa$-subgeneral position in $P^n(\C)$, where $\kappa\geq n$ and $q\geq 2\kappa-n+1$. Assume that    $f^{\#}=0 $ on $\cup_{j=1}^qf^{-1}(H_j)$.   Then $q\leq 2\kappa(n+1)-n+1.$
\end{lemma}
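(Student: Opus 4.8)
The plan is to convert the hypothesis $f^{\#}\equiv 0$ on $\bigcup_{j=1}^{q}f^{-1}(H_j)$ into a lower bound for the order of vanishing of the Wronskian of $f$ at each such point, and then to play this lower bound against the upper bound coming from Nochka's second main theorem. Fix a global reduced representation $f=(f_0,\dots ,f_n)$ (the $f_i$ entire, without common zeros); for $H_j=\{a_{j0}\omega_0+\dots +a_{jn}\omega_n=0\}$ write $\varphi_j:=a_{j0}f_0+\dots +a_{jn}f_n$, so that $(\varphi_j)_0=(H_j(f))_0$, and let $W:=W(f_0,\dots ,f_n)$ be the Wronskian, which is entire and, since $f$ is linearly nondegenerate, not identically zero. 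Recall from (\ref{2020}) that $f^{\#}(z_0)=0$ precisely when all minors $f_sf_t'-f_s'f_t$ vanish at $z_0$, i.e.\ when $(f_0,\dots ,f_n)(z_0)$ and $(f_0',\dots ,f_n')(z_0)$ are linearly dependent; as $(f_0,\dots ,f_n)(z_0)\ne 0$ this forces $(f_0',\dots ,f_n')(z_0)=\lambda\,(f_0,\dots ,f_n)(z_0)$ for some $\lambda\in\C$, whence $\varphi_j'(z_0)=\lambda\varphi_j(z_0)$ for every $j$ (so, incidentally, $\varphi_j$ vanishes to order at least $2$ at every point of $f^{-1}(H_j)$, the direct generalization of the one‑dimensional Lappan obstruction).

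The heart of the matter is the local claim: \emph{if $f^{\#}(z_0)=0$, then $W$ vanishes at $z_0$ to order at least $n$.} To prove it, relabel the coordinates so that $f_0(z_0)\ne 0$ and set $g_i:=f_i/f_0$ for $i=1,\dots ,n$, holomorphic near $z_0$. From $(f_0',\dots ,f_n')(z_0)=\lambda (f_0,\dots ,f_n)(z_0)$ one computes
\begin{align*}
g_i'(z_0)=\frac{f_i'f_0-f_if_0'}{f_0^{2}}(z_0)=\frac{\lambda f_if_0-f_i\lambda f_0}{f_0^{2}}(z_0)=0,\qquad i=1,\dots ,n.
\end{align*}
Using the standard Wronskian identities $W(\phi h_1,\dots ,\phi h_m)=\phi^{m}W(h_1,\dots ,h_m)$ and $W(1,g_1,\dots ,g_n)=W(g_1',\dots ,g_n')$ we get $W=f_0^{\,n+1}\,W(g_1',\dots ,g_n')$; writing $g_i'=(z-z_0)\psi_i$ (possible since $g_i'(z_0)=0$) and applying the first identity once more gives $W(g_1',\dots ,g_n')=(z-z_0)^{n}\,W(\psi_1,\dots ,\psi_n)$. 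Since $f_0(z_0)\ne 0$, this shows $\mathrm{ord}_{z_0}W\ge n$.

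Now combine. Put $Z:=\bigcup_{j=1}^{q}f^{-1}(H_j)$. By hypothesis $f^{\#}\equiv 0$ on $Z$, so $\mathrm{ord}_{z_0}W\ge n$ at every $z_0\in Z$; moreover, by the $\kappa$-subgeneral position each point of $\C$ lies in $f^{-1}(H_j)$ for at most $\kappa$ indices $j$ (otherwise $f(z_0)\in\bigcap_{i=0}^{\kappa}H_{j_i}=\varnothing$). Hence, for every $z_0\in\C$,
\begin{align*}
\sum_{j=1}^{q}\min\{\mathrm{ord}_{z_0}\varphi_j,\,n\}\ \le\ \kappa\,\mathrm{ord}_{z_0}W
\end{align*}
(both sides vanish when $z_0\notin Z$; when $z_0\in Z$ the left side is at most $\kappa n\le \kappa\,\mathrm{ord}_{z_0}W$). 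Passing to counting functions, $\sum_{j=1}^{q}N_f^{[n]}(r,H_j)\le \kappa\,N(r,(W)_0)$. On the other hand, Nevanlinna's lemma on the logarithmic derivative (applied via $W=(f_0\cdots f_n)\det(f_j^{(i)}/f_j)$, together with the standard estimate $m(r,f_j)\le T_f(r)+O(1)$) yields the classical bound $N(r,(W)_0)\le (n+1)\,T_f(r)+o(T_f(r))$; in fact any bound $C\,T_f(r)+o(T_f(r))$ with $C\le 2n$ would suffice below. Feeding both facts into Nochka's second main theorem,
\begin{align*}
(q-2\kappa+n-1)\,T_f(r)\ \le\ \sum_{j=1}^{q}N_f^{[n]}(r,H_j)+o(T_f(r))\ \le\ \kappa(n+1)\,T_f(r)+o(T_f(r)),
\end{align*}
for all $r$ outside a set of finite Lebesgue measure. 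Since $f$ is linearly nondegenerate it is nonconstant, so $T_f(r)\to\infty$; dividing by $T_f(r)$ and letting $r\to\infty$ along the complement of the exceptional set gives $q-2\kappa+n-1\le \kappa(n+1)$, that is,
\begin{align*}
q\ \le\ \kappa(n+3)-n+1\ \le\ 2\kappa(n+1)-n+1,
\end{align*}
the last inequality holding because $n\ge 1$. This proves the lemma.

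The main obstacle is the local estimate $\mathrm{ord}_{z_0}W\ge n$ of the second paragraph; everything after it is bookkeeping: the $\kappa$-subgeneral position to bound incidences, the trivial inequality $\min\{\cdot,n\}\le n$, the standard growth bound for $N(r,(W)_0)$, and the cited Nochka inequality. Two routine points need care: all the Nevanlinna estimates hold only outside a set of finite measure, which is harmless because $T_f(r)\to\infty$; and in the degenerate case $q=2\kappa-n+1$ there is nothing to prove, since the asserted bound is then automatic.
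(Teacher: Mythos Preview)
Your argument is correct and in fact yields a sharper inequality than the paper's own proof. The paper proceeds differently: it chooses a generic hyperplane $L_0$ so that $f^{-1}(L_0)$ avoids $\bigcup_j f^{-1}(H_j)$, picks $L_t$ with $(L_t(f)/L_0(f))'\not\equiv 0$, observes that this derivative vanishes on $\bigcup_j f^{-1}(H_j)$, and concludes $\sum_j N_f^{[1]}(r,H_j)\le \kappa\,T_{(L_t(f)/L_0(f))'}(r)\le 2\kappa\,T_f(r)+o(T_f(r))$; then $\sum_j N_f^{[n]}\le n\sum_j N_f^{[1]}$ and Nochka give $q\le 2\kappa(n+1)-n+1$. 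Your route replaces this auxiliary meromorphic function by the Wronskian $W(f_0,\dots,f_n)$, and the key local computation---that $f^\#(z_0)=0$ forces $\mathrm{ord}_{z_0}W\ge n$---lets you bound $\sum_j N_f^{[n]}(r,H_j)\le \kappa\,N(r,(W)_0)\le \kappa(n+1)\,T_f(r)+o(T_f(r))$ directly, without passing through $N^{[1]}$. This gives $q\le \kappa(n+3)-n+1$, which improves the paper's bound by $\kappa(n-1)$ for every $n\ge 2$. The paper's method is perhaps more elementary (it uses only the Nevanlinna theory of a single meromorphic function), while yours is more conceptual and exploits more of the hypothesis; both feed into Nochka's inequality in the same way at the end. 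Your justification of $N(r,(W)_0)\le (n+1)T_f(r)+o(T_f(r))$ is terse but standard: linear nondegeneracy guarantees that no $f_j$ is identically zero, so the factorization $W=\big(\prod_j f_j\big)\det(f_j^{(i)}/f_j)$, Jensen's formula, the first main theorem for the coordinate hyperplanes, and the logarithmic derivative lemma combine exactly as you indicate.
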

\begin{proof} Let $(f_0,\dots,f_n)$ be a reduced presentation of $f$.  For each $a\in\cup_{j=1}^qf^{-1}(H_j)$, we define
$$\Cal C_a:=\{(c_0,\dots,c_n)\in\C^{n+1}: c_0f_0(a)+\cdots+c_nf_n(a)=0\}.$$
Since $\Cal C_a$ is a vector subspace of dimension $n$ of $\C^{n+1}$ and since $\cup_{j=1}^qf^{-1}(H_j)$  is at most countable, it follows that there exists
\begin{equation*}
(c_0,\dots, c_n)\in \C^{n+1}\setminus(\cup_{a\in\cup_{j=1}^q f^{-1}(H_j)}\Cal C_a).
\end{equation*}

Let $L_0,\dots,L_n$ be $n+1$ hyperplanes in general position in $P^n(\C),$ where $L_0$ is defined by the equation: $c_0\omega_0+\cdots+c_n\omega_n=0.$\\
 By our choice for $(c_0,\dots, c_n)$
 \begin{align}
f^{-1}(L_0)\cap (\cup_{j=1}^qf^{-1}(H_j))=\varnothing.\label{t1}
\end{align}
Set $F:=(L_0(f):\cdots: L_n(f)):\C\to\P^n(\C).$ Then, $F$ is linearly nondegenerate and $T_F(r)=T_f(r)+O(1).$

Since $f^{\#}$ vanishes on $ \cup_{j=1}^qf^{-1}(H_j)$, we have $$(f_0:\cdots:f_n)=(f'_0:\cdots:f'_n)\;\text{ on }\; \cup_{j=1}^qf^{-1}(H_j).$$
Hence,
\begin{align}(L_0(f):\cdots: L_n(f))=\left((L_0(f))':\cdots: (L_n(f))'\right)\;\text{ on}\; \cup_{j=1}^qf^{-1}(H_j).\label{t2}
\end{align}
Since $F$ is  linearly nondegenerate, the Wronskian of $F$ is not identically equal to zero. Therefore,  there exists $t\in\{1,\dots,n\}$ such that
\[\det\begin{pmatrix}L_0(f)&L_{t}(f)\\
(L_0(f))'&(L_{t}(f))'\end{pmatrix}\not\equiv 0,\;\;\text{hence,}\left(\frac{L_{t}(f)}{L_0(f)}\right)'\not\equiv 0 .\]
By (\ref{t1}) and (\ref{t2}), we have
\begin{align}
\left(\frac{L_{t}(f)}{L_0(f)}\right)'=0\;\text{ on}\; \cup_{j=1}^qf^{-1}(H_j).\label{t3}
\end{align}
From the first main theorem and the lemma on logarithmic derivative of Nevanlinna theory for meromorphic functions, we get easily  that
$$T_{\left(\frac{L_{t}(f)}{L_0(f)}\right)'}(r)\leq 2T_{\left(\frac{L_{t}(f)}{L_0(f)}\right)}(r)+o\left(T_{\left(\frac{L_{t}(f)}{L_0(f)}\right)}(r)\right).$$
On the other hand, for each $a\in\C,$ since $H_1,\dots, H_q$ are in $\kappa$-subgeneral position in $P^n(\C)$, it follows that   there are at most $\kappa$  of them passing  through  $f(a).$  Hence, by (\ref{t1}) and (\ref{t3}), we have
\begin{align*}\sum_{j=1}^qN^{[1]}_f(r,H_j)&\leq \kappa N_{\left(\frac{L_{t}(f)}{L_0(f)}\right)'}(r)\notag\\
&\leq \kappa T_{\left(\frac{L_{t}(f)}{L_0(f)}\right)'}(r)+O(1)\notag\\
&\leq2\kappa T_{\frac{L_{t}(f)}{L_0(f)}}(r)+o\left(T_{\frac{L_{t}(f)}{L_0(f)}}(r)\right)\notag\\
&\leq2\kappa T_F(r)+o(T_F(r))\notag\\
&=2\kappa T_f(r)+o(T_f(r)).
\end{align*}
Then, by Nochka's second main theorem, we have
\begin{align*}
\Big\Vert 2\kappa T_f(r)+o(T_f(r))&\geq \sum_{j=1}^qN^{[1]}_f(r,H_j)\\
&\geq\frac{1}{n} \sum_{j=1}^qN^{[n]}_f(r,H_j)\\
&\geq\frac{q-2\kappa+n-1}{n} T_f(r)-o(T_f(r)).
\end{align*}
Hence, $q\leq 2\kappa(n+1)-n+1.$
\end{proof}
\noindent{ \bf Proof of Theorem \ref{Lappan}.}
Suppose that $f$ is not normal, then by Lemma \ref{Hahn}, there exist  sequences $\{z_k\}\subset\triangle,$ $\{r_k\},$ $r_k>0,$ with $\lim_{k\to\infty}\frac{r_k}{1-|z_k|}=0$ such that $g_k(\xi):=f(z_k+ r_k\xi)$ converges  uniformly on compact subsets of $\C$ to a non-constant holomorphic mapping $g$ of $\C$ into $P^n(\C).$

 Without loss of the generality, we may assume that $g(\C)\not\subset H_j$ for all $j\in\{1,\dots,q_0\}$ and $g(\C)\subset H_j$ for all $j\in\{q_0+1,\dots,q\},$ for some $q_0\leq q.$ Denote by $\mathbb P$ the smallest subspace of $P^n(\C)$ containing $g(\C)$. Then $p:=\dim \mathbb P\geq 1$, and $g$ is a linearly non-degenerate  entire curve in $\mathbb P.$ Since $H_1,\dots,H_q$ are in general position, we have $q-q_0+p\leq n,$ furthermore,
  $H_1':=H_1\cap \mathbb P, \dots, H_{q_0}':=H_{q_0}\cap\mathbb P$ are hyperplanes in $n-(q-q_0)$-subgeneral position in $\mathbb P.$

   Since $q\geq n(2n+1)+2>2n+1$,  we have  $q_0>q_0-(q-q_0)-(q-2n-1)-p=2[n-(q-q_0)]-p+1.$

 We now prove that $g^{\#}(\xi)=0$ for all $\xi\in\cup_{j=1}^{q_0}g^{-1}(H_j)=\cup_{j=1}^{q_0}g^{-1}(H_j').$
To do this, we consider an arbitrary point  $\xi_0\in\cup_{j=1}^{q_0}g^{-1}(H_j').$   Take an index $j_0\in\{1,\dots,q_0\}$ such that  $\xi_0\in g^{-1}(H'_{j_0})=0.$ By Hurwitz's Theorem there are values $\{\xi_k\}$  (for all $k$ sufficiently large), $\xi_k\to\xi_0$ such that $\xi_k\in g_k^{-1}(H_{j_0}),$ and hence, $z_k+r_k\xi_k\in f^{-1}(H_{j_0}).$
Therefore, by the assumption, there is a positive constant $M$ such that
$$(1-|z_k+r_k\xi_k|^2)f^{\#}(z_k+r_k\xi_k)<M$$
for all $k$ sufficiently large. \\
  We have
\begin{align*}g^{\#}(\xi_0)&=\lim_{k\to\infty}g_k^{\#}(\xi_k)=\lim r_k f^{\#}(z_k+r_k\xi_k)\notag\\
&=\lim_{k\to\infty}\frac{r_k(\frac{1}{1-|z_k|}-|\frac{z_k}{1-|z_k|}+\frac{r_k}{1-|z_k|}\xi_k|)^{-1}}{(1-|z_k|)(1+|z_k+r_k\xi_k|)} (1-|z_k+r_k\xi_k|^2)f^{\#}(z_k+r_k\xi_k)\notag\\
&=0,
\end{align*}
(note that $\lim_{k\to\infty}\frac{r_k}{1-|z_k|}=0$ and $\frac{1}{1-|z_k|}-|\frac{z_k}{1-|z_k|}+\frac{r_k}{1-|z_k|}\xi_k|\geq(\frac{1}{1-|z_k|}-\frac{|z_k|}{1-|z_k|})-\frac{r_k}{1-|z_k|}|\xi_k|>\frac{1}{2}$, for all $k$ sufficiently large).\\
Hence,
\begin{align*}
g^{\#}=0\; \text{ in}\; \cup_{j=1}^{q_0}g^{-1}(H_j').
\end{align*}
Applying  Lemma \ref{SMT}, we have
$$q_0\leq 2(n-(q-q_0))(p+1)-p+1.$$
Then
$$q_0+2(p+1)(q-q_0)\leq 2n(p+1)-p+1.$$
Therefore,
\begin{align*}q&=q_0+(q-q_0)\\
&\leq q_0+2(p+1)(q-q_0)\\
&\leq 2n(p+1)-p+1\\
&\leq n(2n+1)+1.
\end{align*}
This contradicts to the assumption that $q\geq n(2n+1)+2.$

We have completed the proof of Theorem \ref{Lappan}.\hfill$\square$\\

\noindent Tran Van Tan\\
Department of Mathematics\\
  Hanoi National University of Education\\
 136-Xuan Thuy street, Cau Giay, Hanoi, Vietnam\\
e-mail: tantv@hnue.edu.vn
\end{document}